\documentclass{article}%
\usepackage{amsfonts}
\usepackage{amsmath}%
\setcounter{MaxMatrixCols}{30}%
\usepackage{amssymb}%
\usepackage{graphicx}
\providecommand{\U}[1]{\protect\rule{.1in}{.1in}}
\newtheorem{theorem}{Theorem}

\newtheorem{corollary}[theorem]{Corollary}

\newtheorem{lemma}[theorem]{Lemma}

\newenvironment{proof}[1][Proof]{\noindent\textbf{#1.} }{\ \rule{0.5em}{0.5em}}
\begin{document}

\title{The Automorphism Group of a Self Dual Binary [72,36,16] Code Does Not Contain
$Z_{4}$}
\author{Vassil Yorgov \thanks{Department of Mathematics and Computer Science,
Fayetteville State University, NC.} \ and Daniel Yorgov~\thanks{Department of
Mathematical \& Statistical Sciences, University of Colorado Denver.}}
\date{}
\maketitle

\begin{abstract}
It has been proven in a series of works that the order of the automorphism
group of a binary [72,36,16] code does not exceed five. We obtain a
parametrization of all self-dual binary codes of length 72 with automorphism
of order 4 which can be extremal. We use extensive computations in MAGMA and
on a supercomputer to show that an extremal binary code of length 72 does not
have an element of order 4.

\end{abstract}

Keywords: Automorphism, extremal code, self dual code.

\section{Introduction}

A binary self-dual code of length $n$ is doubly-even if the weight of every
code vector is a multiple of four. An upper bound $d\leq4\lfloor
n/24\rfloor+4$ for the minimum weight $d$ of such code is given in
\cite{MallowsSl}. Self-dual codes achieving this bound are called extremal.
The extremal codes of length divisible by 24 are of particular interest. There
are unique extremal codes of length 24 and 48 \cite{Pless1968},
\cite{HoughtonLam}. It is not known if an extremal code of length 72 exists
\cite{Sloane73}.

Let $C$ be a binary extremal self-dual \ doubly-even code of length 72. One
possible way of looking for such code is by assuming that $C$ has nontrivial
automorphisms. The automorphism group of $C$ is studied in a series of works.
The possible prime divisors of the group are determined in \cite{ConwayPless}.
The prime divisors bigger than 5 are eliminated in \cite{Pless1982},
\cite{PlessThom}, \cite{HuffYor} , and \cite{FeulnerNebe}. The automorphism
group of $C$ and its possible order is studied in \cite{Bouy2004},
\cite{Yor2006}, \cite{BouyBriWill}, \cite{Yankov2012}, \cite{Nebe2012},
\cite{Borello2012} , \cite{BoreVoltaNebe}, and \cite{Borello2014}. As a result
it is known that the order of the automorphism group of $C$ is at most 5.

In this note we search for a code $C$ which allows the cyclic group of degree
4, $Z_{4},$ as an automorphism group. The subcode of $C$ fixed by an
automorphism of order 2 corresponds to a self dual [36,18,8] binary code. All
such codes are completely classified \cite{AguilarGaborit}. Up to equivalence
their number is 41. We prove that only three of them correspond to $C$, namely
the codes $C_{4},$ $C_{12},$ and $C_{19}$ from the Munemasa's online database
of binary self-dual codes \cite{Munemasa}. Using some resent results on the
module structure of $C$ \cite{Nebe2012}, we obtain that a generator matrix of
$C$ depends on 72 binary parameters. Each of these 3(2\symbol{94}72) matrices
generates a doubly even self-dual code of length 72 with cyclic automorphism
group of order 4. We further decrease the search space by using affine
transformations which preserve the fixed subcode that determines $C_{4},$
$C_{12},$ and $C_{19},$ correspondingly. These transformations act on a
certain 27 dimensional subcode of $C$ which matrix depends only on the first
36 parameters. Using computer algebra system Magma \cite{BosmaMagma} and a
desktop computer we split these subcodes into orbits and compute the minimum
weight of a representative of each orbit. The total number of orbits of weight
16 subcodes is 1558954. Each of them determines 2\symbol{94}36 generator
matrices for $C$ which is still a formidable search space. These
1558954(2\symbol{94}36) binary matrices were traversed and checked on the
Janus supercomputer at the University of Colorado. Each matrix generated a
vector of weight less than 16. Thus we have the following result.

\begin{theorem}
\label{main} The automorphism group of a binary self-dual [72,36,16] code does
not have an element of order four.
\end{theorem}

\section{Parametrization of $C$}

In this section $C\leq\mathbb{F}_{2}^{72}$ is a self-dual binary code of
length 72 with minimum weight 16 and automorphism $g$ of order 4. As $g^{2}$
has order 2, it does not have fixed points \cite{Bouy2002} . Hence, $g$ is
free of fixed points and we may assume that \
\begin{equation}
g=(1,2,3,4)(5,6,7,8)\cdots(69,70,71,72).\label{eqn_g}%
\end{equation}
It is known \cite{Nebe2012} that $C$ is a free module over the group ring
$R=\mathbb{F}_{2}\left\langle g\right\rangle .$ As the dimension of $R$ as
vector space over $\mathbb{F}_{2}$\ \ is 4, the free $R$ -module $C$ has rank 9.

Let
\[
v=(v_{1,0},v_{1,1},v_{1,2},v_{1,3},v_{2,0},v_{2,1},v_{2,2},v_{2,3}%
,\ldots,v_{18,0},v_{18,1},v_{18,2},v_{18,3})
\]
be a vector from $\mathbb{F}_{2}^{72}.$ We define the maps $\mu:\mathbb{F}%
_{2}^{72}\rightarrow R^{18}$ and $\mu^{\prime}:\mathbb{F}_{2}^{72}\rightarrow
R^{18}$\ by
\begin{align*}
\mu(v)  & =\left(  \sum\limits_{i=0}^{3}v_{1,i}g^{i},\sum\limits_{i=0}%
^{3}v_{2,i}g^{i},\ldots,\sum\limits_{i=0}^{3}v_{18,i}g^{i}\right)  ,\\
\mu^{\prime}(v)  & =\left(  \sum\limits_{i=0}^{3}v_{1,i}g^{-i},\sum
\limits_{i=0}^{3}v_{2,i}g^{-i},\ldots,\sum\limits_{i=0}^{3}v_{18,i}%
g^{-i}\right)  .
\end{align*}
Thus $\mu(C)$ is a code of length 18 over the ring $R$ and has rank 9.

A version of the following lemma for automorphisms of odd prime orders is
proved in \cite{Yor1983}.

\begin{lemma}
\label{Lmu} Let $u$ and $v$ be vectors from $\mathbb{F}_{2}^{72}.$ The vector
$u$ is orthogonal to the vectors $g^{i}v$ for $i=0,1,2,3$ if and only if
$\mu(u)$ and $\mu^{\prime}(v)$ are orthogonal with respect to the usual inner
product in $R^{18}.$
\end{lemma}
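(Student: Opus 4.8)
The plan is to prove the biconditional by a direct computation of the $R$-valued inner product
\[
\langle \mu(u),\mu'(v)\rangle:=\sum_{j=1}^{18}\mu(u)_j\,\mu'(v)_j\in R,
\]
and then reading off its four coefficients with respect to the $\mathbb{F}_2$-basis $1,g,g^{2},g^{3}$ of $R$. Since an element of $R$ is zero if and only if each of these four coefficients vanishes, it suffices to show that the coefficient of $g^{\ell}$ in $\langle\mu(u),\mu'(v)\rangle$ equals the Euclidean inner product $u\cdot g^{\ell}v$ for $\ell=0,1,2,3$.

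First I would fix how $g$ moves a vector. Writing $v=(v_{j,i})$ with $j\in\{1,\dots,18\}$ and $i\in\mathbb{Z}/4\mathbb{Z}$, the cycle structure (\ref{eqn_g}) gives $(g^{\ell}v)_{j,i}=v_{j,i-\ell}$ with the second index read modulo $4$, whence
\[
u\cdot(g^{\ell}v)=\sum_{j=1}^{18}\ \sum_{i\in\mathbb{Z}/4\mathbb{Z}}u_{j,i}\,v_{j,i-\ell}
=\sum_{j=1}^{18}\ \sum_{\substack{i,k\in\mathbb{Z}/4\mathbb{Z}\\ i-k\equiv\ell}}u_{j,i}\,v_{j,k}.
\]
(The opposite convention for the action of $g$ merely replaces $\ell$ by $-\ell$ here, which permutes the four coefficients and changes nothing below.) Next I would expand each coordinate product: from the definitions $\mu(u)_j=\sum_{i=0}^{3}u_{j,i}g^{i}$ and $\mu'(v)_j=\sum_{k=0}^{3}v_{j,k}g^{-k}$, so in the commutative ring $R$ (where $g^{4}=1$)
\[
\mu(u)_j\,\mu'(v)_j=\sum_{i,k}u_{j,i}v_{j,k}\,g^{\,i-k}
=\sum_{\ell\in\mathbb{Z}/4\mathbb{Z}}\Bigl(\ \sum_{\substack{i,k\\ i-k\equiv\ell}}u_{j,i}v_{j,k}\Bigr)g^{\ell}.
\]
Summing over $j=1,\dots,18$ and comparing with the previous display shows that the coefficient of $g^{\ell}$ in $\langle\mu(u),\mu'(v)\rangle$ is exactly $u\cdot(g^{\ell}v)$. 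Hence $\langle\mu(u),\mu'(v)\rangle=0$ in $R$ if and only if $u$ is orthogonal to $g^{\ell}v$ for all $\ell\in\{0,1,2,3\}$, which is the assertion.

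The computation is routine; the only genuine point — and the reason $\mu'$ is defined with the exponents $g^{-k}$ rather than $g^{k}$ — is that multiplying a $g^{i}$-term by a $g^{-k}$-term yields $g^{\,i-k}$, so that the coefficient of $g^{\ell}$ in the product of the $j$-th coordinates collects precisely the index pairs whose difference is $\ell$, matching the pattern produced by shifting $v$ by $g^{\ell}$. Pairing $\mu(u)$ with $\mu(v)$ instead would give a sum over the pairs with $i+k\equiv\ell$, which is not an inner product of $u$ with a cyclic shift of $v$. I expect no real obstacle beyond keeping these index conventions straight; the same bookkeeping, with $R$ replaced by the group algebra of a cyclic group of odd prime order, recovers the variant of the lemma in \cite{Yor1983}.
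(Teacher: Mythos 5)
Your computation is correct: the coefficient of $g^{\ell}$ in $\sum_{j}\mu(u)_j\,\mu'(v)_j$ is precisely $u\cdot(g^{\ell}v)$, and vanishing of an element of $R$ is equivalent to vanishing of all four coefficients in the basis $1,g,g^2,g^3$, which gives the biconditional. The paper itself supplies no proof of this lemma (it only points to the odd-prime-order analogue in \cite{Yor1983}), and your direct expansion — including the observation that the inversion $g^{k}\mapsto g^{-k}$ in $\mu'$ is exactly what makes the $g^{\ell}$-coefficient collect the index pairs with difference $\ell$ — is the standard argument being taken for granted there.
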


Let $h=1+g.$ Then $h^{2}=1+g^{2},$ $\ h^{3}=1+g+g^{2}+g^{3},$ $h^{4}%
=1+g^{4}=0,$ and $1,~h,~h^{2},~h^{3}$ is a basis of $R$ over $\mathbb{F}_{2}.$
Thus, $R=\mathbb{F}_{2}+\mathbb{F}_{2}h+\mathbb{F}_{2}h^{2}+\mathbb{F}%
_{2}h^{3}.$ The next obvious lemma is included for convenience in referencing.

\begin{lemma}
\label{Lh} The $\mathbb{F}_{2}-$ linear transformation on the ring $R$ defined
by $g^{i}\longmapsto g^{-i},$ for $i=1,\ 2,~3,$ maps the basis elements
$1,~h,~h^{2},~h^{3}$ to $1,~h+h^{2}+h^{3},~h^{2},~h^{3},~$correspondingly.
\end{lemma}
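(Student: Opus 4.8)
Write $\sigma$ for the transformation in question; it is by definition $\mathbb{F}_2$-linear and is given on the power basis $1,g,g^2,g^3$ of $R$ by $\sigma(g^i)=g^{-i}=g^{4-i}$, so that $\sigma(1)=1$, $\sigma(g)=g^3$, $\sigma(g^2)=g^2$, $\sigma(g^3)=g$. Since the lemma asks only for the images of the four elements $1,h,h^2,h^3$, the plan is: (i) express each $h^j$ in the power basis; (ii) apply $\sigma$ termwise; (iii) read the result back off in the $h$-basis.

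For step (i) I use the identities recorded just above the lemma: $h=1+g$, $h^2=1+g^2$, and $h^3=1+g+g^2+g^3$ (working in characteristic $2$ with $g^4=1$). Step (ii) then gives $\sigma(1)=1$, $\sigma(h)=1+g^3$, $\sigma(h^2)=1+g^2=h^2$, and $\sigma(h^3)=1+g^3+g^2+g=h^3$. The only image not already visibly in the $h$-basis is $\sigma(h)=1+g^3$; for step (iii) one checks $h+h^2+h^3=(1+g)+(1+g^2)+(1+g+g^2+g^3)=1+g^3$, the coefficients of $g$ and of $g^2$ cancelling modulo $2$ and $1+1+1=1$. Hence $\sigma(1)=1$, $\sigma(h)=h+h^2+h^3$, $\sigma(h^2)=h^2$, and $\sigma(h^3)=h^3$, which is the assertion.

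Since every step is a finite linear computation in the four-dimensional algebra $R$, there is no genuine obstacle here; the only points needing care are the indexing convention ($g^{-1}=g^3$, not $g$) and the characteristic-$2$ cancellations when changing bases. Alternatively, one may note that $\sigma$ is precisely the algebra automorphism of $R$ induced by the inversion $g\mapsto g^{-1}$ on the (abelian) group $\langle g\rangle$, so it suffices to compute $\sigma(h)=1+g^3$ and then read off $\sigma(h^2)=\sigma(h)^2=(1+g^3)^2=1+g^2=h^2$ and $\sigma(h^3)=\sigma(h)^3=(1+g^3)(1+g^2)=1+g+g^2+g^3=h^3$ multiplicatively, a slightly shorter route to the same conclusion.
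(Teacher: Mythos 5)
Your computation is correct: the paper labels this lemma as obvious and supplies no proof, and your direct verification (express $h^j$ in the power basis, apply $g^i\mapsto g^{4-i}$, and re-expand $1+g^3=h+h^2+h^3$ over $\mathbb{F}_2$) is exactly the check the authors intend the reader to perform. The alternative multiplicative route via the induced algebra automorphism is also valid and slightly cleaner, but both amount to the same four-line calculation.
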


In the next theorem we obtain further restrictions on the $R$-code $\mu(C).$

\begin{theorem}
\label{Structure4} Let $C$ be a self-dual doubly-even binary code of length 72
with minimum weight 16 and automorphism $g$ of order 4 given in (\ref{eqn_g}).
Let $h=1+g.$ Up to a column permutation, $\mu(C)$ is a code of length 18 over
the ring $R$ with a generator matrix $[I+B_{1}h+B_{2}h^{2}+B_{3}h^{3}~|~A]$
where $I$ is the identity matrix and$~B_{1},\ B_{2},\ B_{3},$ and $A$ are
binary square matrices of order 9 satisfying the following
requirements:\newline(i) $A$ is orthogonal ($A^{T}A=I$ where $A^{T}$ is the
transpose of $A),$ \newline(ii) $B_{1}$ is symmetric $(B_{1}^{T}=B_{1}),$
\newline(iii) $B_{2}+B_{2}^{T}=B_{1}^{2}+B_{1},$\newline(iv) $B_{3}+B_{3}%
^{T}=B_{2}B_{1}+B_{1}B_{2}+B_{1}^{3}+B_{1}.$ \newline If $B_{1},\ B_{2}%
,\ B_{3},$ and $A$ satisfy the above four conditions, then the $\mu$-preimage
of an $R$-code with a generator matrix $[I+B_{1}h+B_{2}h^{2}+B_{3}h^{3}~|~A]$
\ is a binary self-dual code of length 72.
\end{theorem}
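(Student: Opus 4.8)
The strategy is to normalise a generator matrix of the $R$-code $\mu(C)$ using only the freeness of $C$ over $R$ (from \cite{Nebe2012}), and then to impose self-duality of $C$ via Lemma~\ref{Lmu}. Write $\sigma$ for the map of Lemma~\ref{Lh}, that is, the $\mathbb{F}_2$-algebra automorphism of $R$ extending $g\mapsto g^{-1}$ (multiplicative because $\langle g\rangle$ is abelian); then $\mu^{\prime}(v)$ is $\mu(v)$ with $\sigma$ applied in every coordinate, and $\sigma$ reduces to the identity modulo $h$. First I would check that if the rows of a $9\times 18$ matrix $M$ over $R$ form an $R$-basis of $\mu(C)$, then $M\bmod h$ has $\mathbb{F}_2$-rank $9$: otherwise some nonzero $0/1$ combination $m=\sum_i c_i m_i$ of the rows lies in $hR^{18}$; since $m\neq 0$ (the rows being $R$-independent) we may write $m=hw$, whereupon $h^{3}m=h^{4}w=0$ is a nontrivial $R$-dependence among the rows (here $h^{3}\neq 0$), a contradiction. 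Hence $M\bmod h$ has a nonsingular $9\times 9$ submatrix; after the column permutation bringing those columns to the front, the left $9\times 9$ block of $M$ is nonsingular mod $h$, so invertible over the local ring $R$, and left-multiplying $M$ by its inverse gives a generator matrix $[\,I\mid X\,]$ with $X\in\mathrm{M}_9(R)$.

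I would then bring in self-duality. As $C=C^{\perp}$ is $g$-invariant, $u\perp g^{i}v$ for all $u,v\in C$ and $i=0,1,2,3$, so Lemma~\ref{Lmu} yields $\mu(C)\perp\mu^{\prime}(C)=\sigma(\mu(C))$. Applying $\sigma$ entrywise to $[\,I\mid X\,]$ produces a generator matrix $[\,I\mid\sigma(X)\,]$ of $\sigma(\mu(C))$, so orthogonality of the two families of generating rows reads $I+X\,\sigma(X)^{T}=0$, i.e. $X\,\sigma(X)^{T}=I$; in particular $X$ is invertible and, reducing mod $h$, so is $X_{0}:=X\bmod h$. Because $X_{0}$ is invertible, one solves recursively for a unit $U=I+B_{1}h+B_{2}h^{2}+B_{3}h^{3}$ with binary $B_{1},B_{2},B_{3}$ such that $UX=X_{0}$ has no terms in $h,h^{2},h^{3}$; then $U[\,I\mid X\,]=[\,I+B_{1}h+B_{2}h^{2}+B_{3}h^{3}\mid X_{0}\,]$ is a generator matrix of $\mu(C)$ of the asserted shape, with $A:=X_{0}$.

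It remains to extract (i)--(iv). Put $P:=I+B_{1}h+B_{2}h^{2}+B_{3}h^{3}$. The condition $\mu(C)\perp\sigma(\mu(C))$, i.e. orthogonality of the rows of $[\,P\mid A\,]$ to those of $\sigma([\,P\mid A\,])=[\,\sigma(P)\mid A\,]$, reads $P\,\sigma(P)^{T}+AA^{T}=0$. Using Lemma~\ref{Lh} to write $\sigma(P)=I+B_{1}h+(B_{1}+B_{2})h^{2}+(B_{1}+B_{3})h^{3}$, I would expand $P\,\sigma(P)^{T}$ in powers of $h$: the constant term of $P\,\sigma(P)^{T}+AA^{T}$ is $I+AA^{T}$, giving (i); the coefficient of $h$ is $B_{1}+B_{1}^{T}$, giving (ii); substituting $B_{1}^{T}=B_{1}$, the coefficient of $h^{2}$ rearranges to $B_{2}+B_{2}^{T}=B_{1}^{2}+B_{1}$, giving (iii); and the coefficient of $h^{3}$, after using (ii) and then eliminating $B_{1}B_{2}^{T}$ by means of (iii), rearranges to $B_{3}+B_{3}^{T}=B_{2}B_{1}+B_{1}B_{2}+B_{1}^{3}+B_{1}$, giving (iv). For the converse, running this last computation backwards shows that (i)--(iv) force $P\,\sigma(P)^{T}+AA^{T}=0$, hence the $R$-code $D$ with matrix $[\,P\mid A\,]$ satisfies $D\perp\sigma(D)$; since $P$ is a unit, $D$ is free of rank $9$, so, $\mu$ being an $\mathbb{F}_2$-linear bijection, $C':=\mu^{-1}(D)$ has $|C'|=|D|=|R|^{9}=2^{36}$ and thus $\dim C'=36$; and for $u,v\in C'$, Lemma~\ref{Lmu} applied to $\mu(u)\in D$ and $\mu^{\prime}(v)=\sigma(\mu(v))\in\sigma(D)$ gives $u\perp g^{i}v$ for all $i$, in particular $u\perp v$. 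Therefore $C'$ is self-orthogonal of dimension $36=72-36$, hence self-dual of length $72$.

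The main obstacle, I expect, is the middle of the argument: recognising that $X_{0}$ is forced to be invertible --- precisely what lets the right-hand block be cleared of its $h$-terms at the price of thickening the identity block by the $B_{i}$ --- and then carrying out the (routine but error-prone) expansion of $P\,\sigma(P)^{T}$ that converts the single ring-matrix identity $P\,\sigma(P)^{T}+AA^{T}=0$ into the four stated conditions, the $h^{3}$-coefficient being the delicate one since it must be pushed through both (ii) and (iii) before it matches (iv).
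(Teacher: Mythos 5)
Your proposal is correct and follows essentially the same route as the paper: normalise the generator matrix using freeness of $\mu(C)$ over the local ring $R$, impose self-duality via Lemma~\ref{Lmu} and Lemma~\ref{Lh}, and read off (i)--(iv) from the coefficients of $1,h,h^2,h^3$ in $P\,\sigma(P)^{T}+AA^{T}=0$. You simply supply details the paper leaves implicit (the rank-$9$ claim mod $h$, the invertibility of the right block, and the dimension count in the converse), and your coefficient computations check out.
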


\begin{proof}
As we know $\mu(C)$ is a free $R$ module of rank 9 \cite{Nebe2012}. Since
$h^{4}=0,$ $1+\mathbb{F}h+\mathbb{F}h^{2}+\mathbb{F}h^{3}$ is the set of all
units of $R.$ It follows that, up to a permutation of columns, the code
$\mu(C)$ has a generator matrix
\begin{equation}
\lbrack I~|~A+A_{1}h+A_{2}h^{2}+A_{3}h^{3}]\label{mat1}%
\end{equation}
where $I$ is the identity matrix of order 9 and $A,~A_{1},~A_{2,}~A_{3}$ are
binary square matrices of order 9. Lemma \ref{Lmu} and Lemma \ref{Lh} imply
\[
I+(A+A_{1}h+A_{2}h^{2}+A_{3}h^{3})(A^{T}+A_{1}^{T}(h+h^{2}+h^{3})+A_{2}%
^{T}h^{2}+A_{3}^{T}h^{3})=0.
\]
As $1,~h,~h^{2},~h^{3}$ are linearly independent over $\mathbb{F}_{2},$\ \ we
obtain $I+AA^{T}=0.$ Thus $A$ is orthogonal. \newline Using this we can
replace (\ref{mat1}) with%
\begin{equation}
\lbrack I+B_{1}h+B_{2}h^{2}+B_{3}h^{3}~|~A].\label{gmain}%
\end{equation}
Additional applications of Lemma \ref{Lmu} and Lemma \ref{Lh} on the matrix
(\ref{gmain}) give parts (ii), (iii), and (iv). The proof of the second part
of the Theorem is straightforward.
\end{proof}

Since the matrix $B_{2}+B_{2}^{T}$ from condition (iii) has zero diagonal, we
obtain the following corollary.

\begin{corollary}
\label{diag} The matrix $B_{1}$ from Theorem \ref{Structure4} is such that
$B_{1}^{2}+B_{1}$ has zero diagonal.
\end{corollary}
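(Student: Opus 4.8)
The plan is to unwind the identity in condition (iii) along the diagonal. From Theorem~\ref{Structure4}(iii) we have $B_{2}+B_{2}^{T}=B_{1}^{2}+B_{1}$. The key observation is that for any square matrix $M$ over $\mathbb{F}_{2}$, the matrix $M+M^{T}$ is symmetric with zero diagonal: its $(i,i)$ entry is $M_{ii}+M_{ii}=0$ in characteristic $2$. Applying this to $M=B_{2}$, the left-hand side $B_{2}+B_{2}^{T}$ has all diagonal entries equal to zero, and hence so does the right-hand side $B_{1}^{2}+B_{1}$.

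Concretely, one would write out the $(i,i)$ entry of $B_{1}^{2}+B_{1}$ and show it is $0$: the diagonal entry of $B_{1}^{2}$ is $\sum_{k}(B_{1})_{ik}(B_{1})_{ki}$, which by the symmetry of $B_{1}$ (condition (ii)) equals $\sum_{k}(B_{1})_{ik}^{2}=\sum_{k}(B_{1})_{ik}$; meanwhile the diagonal entry of $B_{1}$ is $(B_{1})_{ii}$, so the $(i,i)$ entry of $B_{1}^{2}+B_{1}$ is $\sum_{k}(B_{1})_{ik}+(B_{1})_{ii}=\sum_{k\ne i}(B_{1})_{ik}$. This alternative route shows the same fact directly from (ii), but it is cleaner simply to invoke that $B_{2}+B_{2}^{T}$ is skew (hence zero-diagonal in characteristic $2$) and read off the conclusion from (iii).

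Since this is an immediate consequence of a single stated identity together with the elementary fact about $M+M^{T}$ in characteristic two, there is essentially no obstacle; the only thing to be careful about is the characteristic-$2$ bookkeeping — in particular that "symmetric" and "skew-symmetric" partially coincide and that squaring is the Frobenius map on entries — but none of this requires real work. The corollary is genuinely a one-line remark, as the text signals.
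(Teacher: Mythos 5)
Your main argument is correct and is exactly the paper's proof: the diagonal entries of $B_{2}+B_{2}^{T}$ are $B_{2}[i,i]+B_{2}[i,i]=0$ in characteristic $2$, so by condition (iii) the diagonal of $B_{1}^{2}+B_{1}$ vanishes. One caution, though: your claimed ``alternative route'' is not actually a proof. Computing the $(i,i)$ entry of $B_{1}^{2}+B_{1}$ from the symmetry of $B_{1}$ gives $\sum_{k\neq i}(B_{1})_{ik}$, and nothing in condition (ii) forces this off-diagonal row sum to be even; that vanishing is precisely the content of the corollary, and it genuinely requires (iii). Indeed, the paper uses this corollary as a nontrivial filter, cutting the $41$ candidate $[36,18,8]$ codes down to $3$, which would be impossible if the condition followed from symmetry alone. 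Stick with the first argument and drop the parenthetical claim that (ii) suffices.
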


Any $R$ - code with a generator matrix satisfying the requirements of Theorem
\ref{Structure4} corresponds under the map $\mu^{-1}$ to a binary self-dual
code of length 72 which is not necessarily doubly-even. Since an extremal code
of length 72 is doubly-even, the search space can be restricted. It is done in
the next theorem which provides a necessary and sufficient condition for a
code to be doubly-even.

We use the following well known Lemma (see \cite{HuffPless03}, page 8) in the
proof of Theorem \ref{Th_de}.

\begin{lemma}
\label{mod4} If $u$ and $v$ are binary vectors of the same length, then
\[
wt(u+v)\equiv wt(u)+wt(v)+2(u,v)~\operatorname{mod}4
\]
where $(u,v)$ is the inner product.
\end{lemma}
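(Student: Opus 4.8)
The plan is to prove the congruence by a direct count over coordinate positions, comparing the supports of $u$, $v$, and $u+v$. Writing $A=\{i:u_{i}=1\}$ and $B=\{i:v_{i}=1\}$ for the supports, we have $wt(u)=|A|$ and $wt(v)=|B|$, while the $\mathbb{F}_{2}$-inner product satisfies $(u,v)\equiv|A\cap B|\pmod 2$ by its very definition. The key observation is that the support of $u+v$ is the symmetric difference $A\triangle B$, because a coordinate of $u+v$ equals $1$ precisely when exactly one of $u_{i},v_{i}$ is $1$.

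From here I would invoke the elementary inclusion--exclusion identity $|A\triangle B|=|A|+|B|-2|A\cap B|$, which yields the exact integer equation
\[
wt(u+v)=wt(u)+wt(v)-2|A\cap B|.
\]
It then suffices to show $-2|A\cap B|\equiv 2(u,v)\pmod 4$. Since $-2\equiv 2\pmod 4$ and $4|A\cap B|\equiv 0\pmod 4$, we get $-2|A\cap B|\equiv 2|A\cap B|\pmod 4$; and because $|A\cap B|\equiv (u,v)\pmod 2$, doubling both sides preserves the congruence modulo $4$, so $2|A\cap B|\equiv 2(u,v)\pmod 4$. Substituting into the displayed equation gives the assertion.

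I do not expect any real obstacle here; the statement is elementary (and is quoted as such from \cite{HuffPless03}). The only point requiring a moment's care is the passage from the mod-$2$ fact $|A\cap B|\equiv(u,v)\pmod 2$ to the mod-$4$ fact $2|A\cap B|\equiv 2(u,v)\pmod 4$, which is valid exactly because we multiply through by $2$. An alternative route is induction on the common length $n$: append one coordinate at a time and check the four possibilities for the new entries $(u_{n},v_{n})$, updating $wt(u)$, $wt(v)$, $wt(u+v)$, and $(u,v)$ accordingly. This works but is less transparent than the support count, so I would present the counting argument.
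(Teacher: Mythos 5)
Your argument is correct: the identity $wt(u+v)=wt(u)+wt(v)-2|A\cap B|$ for the supports $A,B$, together with the observations that $-2\equiv 2 \pmod 4$ and that $|A\cap B|\equiv (u,v) \pmod 2$ implies $2|A\cap B|\equiv 2(u,v)\pmod 4$, gives exactly the stated congruence. The paper itself supplies no proof, citing \cite{HuffPless03} instead, and your support-counting argument is precisely the standard one found there, so there is nothing to reconcile.
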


\begin{theorem}
\label{Th_de} Let $D$ be an $R$ - code with a generator matrix $[I+B_{1}%
h+B_{2}h^{2}+B_{3}h^{3}~|~A]$ satisfying the requirements (i), (ii), (iii),
and (iv) of Theorem \ref{Structure4}. The corresponding binary code $\mu
^{-1}(D)$ of length 72 is doubly-even if and only if
\[
B_{3}[i,i]\equiv\frac{1+wt(A[i])}{2}+B_{2}[i,i]+\sum_{j=1}^{9}(B_{1}%
[i,j]+1)B_{2}[i,j]~\operatorname{mod}2
\]
for $i=1,2,\ldots,9$ where $A[i]$ is row $i$ of the matrix $A$ and
$B_{2}[i,j]$ is the entry in row $i$ column $j$ of the matrix $B_{2}.$
\end{theorem}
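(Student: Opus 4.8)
The plan is to compute the weight of a general codeword of $\mu^{-1}(D)$ modulo $4$ and read off exactly when it always vanishes. A generic codeword of $D$ is an $R$-linear combination $\sum_{k} r_k (\text{row }k)$ of the rows of $[I+B_1h+B_2h^2+B_3h^3 \mid A]$ with $r_k\in R$; under $\mu^{-1}$ this corresponds to a binary vector in $\mathbb{F}_2^{72}$. Since $D$ is already known to be self-dual (hence singly-even, all weights $\equiv 0\bmod 2$), double-evenness is equivalent to all weights being $\equiv 0 \bmod 4$, and by Lemma \ref{mod4} applied inductively it suffices to check this on a generating set together with the pairwise inner products, all of which vanish by self-duality. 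So the condition ``$\mu^{-1}(D)$ doubly-even'' reduces to: $wt(\mu^{-1}(\text{row }i))\equiv 0 \bmod 4$ for $i=1,\dots,9$. (One must check the inductive step using Lemma \ref{mod4}: if $wt(u),wt(v)\equiv 0\bmod 4$ and $(u,v)\equiv 0\bmod 2$ then $wt(u+v)\equiv 0\bmod 4$; self-duality gives $(u,v)\equiv 0\bmod 2$ for all $u,v\in\mu^{-1}(D)$, and one also needs that $\mu^{-1}$ of each $R$-multiple $g^j\cdot(\text{row }i)$ has the same weight as row $i$, which is immediate since $g$ acts as a permutation.)

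Next I would compute $wt(\mu^{-1}(\text{row }i)) \bmod 4$ explicitly. Row $i$ of the generator matrix, viewed in $R^{18}$, has left block the $i$-th standard basis vector times $1+B_1[i,\cdot]h+B_2[i,\cdot]h^2+B_3[i,\cdot]h^3$ read coordinatewise, and right block $A[i,\cdot]$ (a $0$/$1$ vector, each entry giving the constant $0$ or $1$ in $R$). Under $\mu^{-1}$ each entry $r\in R$ becomes a length-$4$ binary block; since $1=1$, $h=1+g$, $h^2=1+g^2$, $h^3=1+g+g^2+g^3$ correspond (via $g=(1,2,3,4)$) to the blocks $1000$, $1100$, $1010$, $1111$, the weight contributed by a left-block entry $r=\varepsilon_{ij}+B_1[i,j]h+B_2[i,j]h^2+B_3[i,j]h^3$ (with $\varepsilon_{ij}=\delta_{ij}$) is the weight of that $\mathbb{F}_2$-combination of $\{1000,1100,1010,1111\}$, which is an explicit small function of the four bits; and each right-block entry contributes $0$ or $1$ according to $A[i,j]$. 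Summing the right block gives $wt(A[i])$. Summing the diagonal left-block entry ($j=i$) and the off-diagonal ones ($j\ne i$) separately, and reducing mod $4$, should produce the stated formula: the ``$\tfrac{1+wt(A[i])}{2}$'' term packages the parity of $wt(A[i])$ (which is forced odd since $A$ is orthogonal of odd order $9$, so $wt(A[i])\equiv 1\bmod 2$ and the expression is an integer) together with the diagonal $h$-contribution, while the sum $\sum_j (B_1[i,j]+1)B_2[i,j]$ captures the cross-terms between the $h^2$-part and the ambient $1+h$ structure, and the $B_2[i,i]$, $B_3[i,i]$ terms come from the diagonal $h^2$, $h^3$ contributions.

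The main obstacle is the bookkeeping in the mod-$4$ computation: the weight of a sum of several of the blocks $1000,1100,1010,1111$ is not additive in the blocks, so I cannot just add up weights entry by entry — I must repeatedly invoke Lemma \ref{mod4} and control all the pairwise inner products among the four generating blocks (these are $0$ or $1$ in a fixed pattern) as well as track which coefficients are $0$ or $1$. The cleanest route is to fix $j$ and write $c_{ij} := \varepsilon_{ij}\cdot 1 + B_1[i,j]h + B_2[i,j]h^2 + B_3[i,j]h^3$, compute $wt(\mu^{-1}(c_{ij}))\bmod 4$ once as a polynomial in the four bits (a $16$-case table, or a closed form such as $\varepsilon_{ij} + 2B_1[i,j] + 2B_2[i,j]\cdot(\text{something}) + \dots$), then sum over $j=1,\dots,18$ using Lemma \ref{mod4} and the identities (ii)--(iv) — in particular the fact that $B_1^2+B_1$ has zero diagonal (Corollary \ref{diag}) — to collapse cross-terms. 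Condition (iii) is what lets one replace symmetrized quantities by $B_1$-expressions; condition (iv) is what makes the off-diagonal $B_3$-entries drop out of the final constraint, leaving only the diagonal entry $B_3[i,i]$ as the free quantity to solve for. Once the row weights are pinned down mod $4$ the stated ``if and only if'' follows, and the converse direction (the formula $\Rightarrow$ doubly-even) is exactly the inductive closure argument from the first paragraph.
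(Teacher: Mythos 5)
Your framework is the right one and matches the paper's: since $\mu^{-1}(D)$ is self-dual, Lemma \ref{mod4} reduces double-evenness to checking that the weights of a set of binary generators are divisible by $4$, and your observation that the $36$ binary generators $\mu^{-1}(g^j\cdot(\text{row }i))$ are coordinate permutations of the nine vectors $\mu^{-1}(\text{row }i)$ is a clean way to cut the check down to $i=1,\dots,9$. (The paper instead takes the $h^j$-multiples of the rows, which are \emph{not} permutations of one another, and therefore has to verify separately that rows $10,\dots,36$ of the resulting $36\times 72$ matrix always have weight divisible by $4$; your reduction avoids that extra check.)

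The genuine gap is that the heart of the theorem --- the derivation of the specific congruence for $B_3[i,i]$ --- is never carried out: you describe a $16$-case table you would compute and then assert the bookkeeping ``should produce the stated formula.'' Since the theorem \emph{is} that formula, this is not a proof. The computation is short once set up correctly: splitting $\mu^{-1}(\text{row }i)$ by residue of the coordinate position modulo $4$ gives
\[
w_i=wt(I[i]+B_1[i]+B_2[i]+B_3[i])+wt(B_1[i]+B_3[i])+wt(B_2[i]+B_3[i])+wt(B_3[i])+wt(A[i]),
\]
and expanding with Lemma \ref{mod4} one finds that $wt(B_3[i])$ and every inner product involving $B_3[i]$ other than $(I[i],B_3[i])=B_3[i,i]$ occur with coefficient $4$, hence vanish modulo $4$; the surviving terms are handled using $wt(A[i])$ odd (from (i)), $wt(B_1[i])\equiv B_1^2[i,i]$ (from (ii)), and Corollary \ref{diag} (from (iii)). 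Two of your heuristic attributions are in fact wrong and would have been exposed by doing this: condition (iv) plays no role in eliminating the off-diagonal $B_3$-entries (they drop out for the trivial reason just stated, and (iv) is used only to guarantee self-duality), and the term $\frac{1+wt(A[i])}{2}$ arises as $\frac{wt(I[i])+wt(A[i])}{2}$, i.e.\ from the identity block plus the $A$ block, not from ``the diagonal $h$-contribution.'' You need to supply the explicit mod-$4$ expansion and the resulting simplification to the stated congruence for the proof to be complete.
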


\begin{proof}
Since $h^{4}=1,$ the matrices $[Ih+B_{1}h^{2}+B_{2}h^{3}~|~Ah],$
$[Ih^{2}+B_{1}h^{3}~|~Ah^{2}],$ and $[Ih^{3}~|~Ah^{3}]$ generate subcodes of
$D.$ Replacing $h$ with $1+g$ and collecting the terms with respect to the
powers of $g,$ we obtain the matrix%

\begin{equation}%
\begin{bmatrix}
I+B_{1}+B_{2}+B_{3} & B_{1}+B_{3} & B_{2}+B_{3} & B_{3} & A & 0 & 0 & 0\\
I+B_{1}+B_{2} & I+B_{2} & B_{1}+B_{2} & B_{2} & A & A & 0 & 0\\
I+B_{1} & B_{1} & I+B_{1} & B_{1} & A & 0 & A & 0\\
I & I & I & I & A & A & A & A
\end{bmatrix}
\label{mm-1}%
\end{equation}
which generates a code equivalent to $\mu^{-1}(D).$ Hence the code is
self-dual by Theorem \ref{Structure4}.

Let $w_{i}$ be the weight of row $i$ of matrix (\ref{mm-1}). Thus $w_{i}$ is
even for $i=1,2,\ldots,36.$ The code is doubly-even if and only if $w_{i}$ is
a multiple of 4.\newline For $i=1,2,\ldots,9$ we have
\begin{align*}
w_{i}  & =wt(I[i]+B_{1}[i]+B_{2}[i]+B_{3}[i])+wt(B_{1}[i]+B_{3}[i])+\\
& wt\left(  B_{2}[i]+B_{3}[i]\right)  +wt\left(  B_{3}[i]\right)  +wt(A[i]).
\end{align*}
\newline Applying Lemma \ref{mod4} several times, we obtain
\begin{align*}
w_{i}  & \equiv wt(I[i])+wt(A[i])+2wt(B_{1}[i])+2wt(B_{2}[i])+\\
& 2(I[i],B_{1}[i])+2(I[i],B_{2}[i])+2(I[i],B_{3}[i])+2(B_{1}[i],B_{2}%
[i])~\operatorname{mod}4.
\end{align*}
Since $A$ is orthogonal, $wt(A[i])$ is odd and
\begin{align*}
\frac{w_{i}}{2}  & \equiv\frac{1+wt(A[i])}{2}+wt(B_{1}[i])+wt(B_{2}[i])+\\
& B_{1}[i,i]+B_{2}[i,i]+B_{3}[i,i]+(B_{1}[i],B_{2}[i])~\operatorname{mod}2
\end{align*}
\newline As $wt(B_{1}[i])\equiv$ $B_{1}B_{1}^{T}[i,i]$ $~\operatorname{mod}2$
and $B_{1}$ is symmetric, we have
\[
B_{1}[i,i]+wt(B_{1}[i])\equiv B_{1}[i,i]+B_{1}^{2}[i,i]~\operatorname{mod}2.
\]
Now Corollary (\ref{diag}) gives $B_{1}[i,i]+wt(B_{1}[i])\equiv
0~\operatorname{mod}2.$ Hence, $w_{i}$ is a multiple of 4 if and only if
\[
B_{3}[i,i]\equiv\frac{1+wt(A[i])}{2}+B_{2}[i,i]+wt(B_{2}[i])+(B_{1}%
[i],B_{2}[i])~\operatorname{mod}2.
\]
\newline For $i=10,~11,~\ldots,36$\ we check similarly that $w_{i}$ is always
a multiple of 4.
\end{proof}

Now we determine the possible matrices $B_{1}$ and $A$ for the code $C.$ We
use the automorphism $g^{2}=(1,3)(2,4)\cdots(71,73)(72,74)$ to define two
mappings. Let $C(g^{2})=\left\{  c\in C~|~g^{2}(c)=c\right\}  $ be the fixed
code of $g^{2}.$ The first mapping is $\pi:C(g^{2})\rightarrow\mathbb{F}%
_{2}^{36}$ defined by
\begin{equation}
\left(  c_{1},c_{2},c_{1},c_{2},\ldots,c_{35,}c_{36},c_{35,}c_{36}\right)
\longmapsto\left(  c_{1},c_{2},\ldots,c_{36}\right)  .\label{pi}%
\end{equation}
The second mapping is $\Phi:C\rightarrow\mathbb{F}_{2}^{36}$ defined by
\[
\left(  c_{1},c_{2},\ldots,c_{72}\right)  \longmapsto\left(  c_{1}+c_{3}%
,c_{2}+c_{4},\ldots,c_{70}+c_{72}\right)  .
\]
It is known \cite{Nebe2012} that $\pi(C(g^{2}))=\Phi(C)$ is a self-dual
[36,18,8] binary code. An application of $\Phi$ is the same as identifying
coordinate positions 1 and 3, 2 and 4, and so on. This identification makes
$g^{2}$ trivial and $\Phi(C)$ becomes a module over the quotient ring
$R/\left\langle g^{2}\right\rangle \cong\mathbb{F}_{2}+\overline{h}%
\mathbb{F}_{2}$ where $\overline{h}$ is the coset $h\left\langle
h^{2}\right\rangle $ and $\overline{h}^{2}=0.$ Thus, $\Phi(C)$ is generated by
the matrix $[I+B_{1}\overline{h}~|~A]$ over the ring $\mathbb{F}_{2}%
+\overline{h}\mathbb{F}_{2}$ and has automorphism $\overline{g}%
=(1,2)(3,4)\cdots(34,36)$ as a binary code. There are 41 inequivalent
self-dual [36,18,8] binary codes \cite{AguilarGaborit}. For each of these 41
codes we find the conjugacy classes of automorphisms of order 2 without fixed
points and select a representative. For each pair of code and orbit
representative we compute the matrix $B_{1}.$ Only for three of the pairs the
requirement of Corollary \ref{diag} are met. They come from the codes $C_{4},$
$C_{12},$ and $C_{19}$ from the Munemasa's online database of binary self-dual
codes \cite{Munemasa}. We reorder the coordinates of the three codes in such
way that $\overline{g}=(1,2)(3,4)\cdots(34,36)$ is the automorphism of order 2
in each of the three pairs. This way we obtain the following lemma.

\begin{lemma}
\label{B1A} Let $C$ be a self-dual doubly-even binary code of length 72 with
minimum weight 16 and automorphism $g$ of order 4 given in (\ref{eqn_g}). Up
to equivalence, the matrices $B_{1}$ and $A$ from Theorem \ref{Structure4} can
be selected as follows:\newline(i) \newline$B_{1}^{(1)}=%
\begin{bmatrix}
0 & 1 & 0 & 0 & 0 & 0 & 0 & 1 & 0\\
1 & 0 & 0 & 0 & 0 & 0 & 0 & 1 & 0\\
0 & 0 & 0 & 0 & 0 & 1 & 1 & 0 & 0\\
0 & 0 & 0 & 0 & 0 & 1 & 0 & 1 & 0\\
0 & 0 & 0 & 0 & 0 & 0 & 0 & 1 & 1\\
0 & 0 & 1 & 1 & 0 & 0 & 1 & 1 & 0\\
0 & 0 & 1 & 0 & 0 & 1 & 0 & 1 & 1\\
1 & 1 & 0 & 1 & 1 & 1 & 1 & 0 & 0\\
0 & 0 & 0 & 0 & 1 & 0 & 1 & 0 & 0
\end{bmatrix}
,$ ~$A^{(2)}=%
\begin{bmatrix}
0 & 1 & 0 & 0 & 1 & 0 & 0 & 0 & 1\\
0 & 1 & 1 & 1 & 0 & 1 & 1 & 1 & 1\\
1 & 0 & 1 & 0 & 0 & 0 & 1 & 0 & 0\\
1 & 1 & 1 & 0 & 0 & 0 & 0 & 1 & 1\\
0 & 0 & 1 & 1 & 1 & 0 & 1 & 0 & 1\\
0 & 1 & 0 & 1 & 1 & 1 & 0 & 1 & 0\\
1 & 1 & 0 & 0 & 0 & 0 & 1 & 1 & 1\\
0 & 0 & 0 & 0 & 1 & 0 & 0 & 1 & 1\\
0 & 0 & 1 & 0 & 1 & 1 & 1 & 0 & 1
\end{bmatrix}
,$\newline(ii) \newline$B_{1}^{(2)}=%
\begin{bmatrix}
0 & 0 & 1 & 0 & 1 & 1 & 1 & 0 & 0\\
0 & 0 & 1 & 0 & 1 & 0 & 0 & 1 & 1\\
1 & 1 & 0 & 1 & 1 & 1 & 1 & 1 & 1\\
0 & 0 & 1 & 0 & 1 & 1 & 1 & 1 & 1\\
1 & 1 & 1 & 1 & 0 & 1 & 0 & 0 & 1\\
1 & 0 & 1 & 1 & 1 & 0 & 1 & 0 & 1\\
1 & 0 & 1 & 1 & 0 & 1 & 0 & 1 & 1\\
0 & 1 & 1 & 1 & 0 & 0 & 1 & 0 & 0\\
0 & 1 & 1 & 1 & 1 & 1 & 1 & 0 & 0
\end{bmatrix}
,$ ~$A^{(2)}=%
\begin{bmatrix}
0 & 1 & 1 & 0 & 1 & 0 & 0 & 1 & 1\\
0 & 1 & 1 & 0 & 1 & 1 & 0 & 1 & 0\\
0 & 1 & 1 & 0 & 0 & 1 & 0 & 1 & 1\\
1 & 0 & 1 & 0 & 1 & 1 & 1 & 1 & 1\\
1 & 1 & 0 & 0 & 0 & 0 & 0 & 1 & 0\\
1 & 0 & 1 & 1 & 1 & 1 & 0 & 1 & 1\\
1 & 0 & 1 & 1 & 0 & 0 & 1 & 1 & 0\\
1 & 1 & 1 & 0 & 0 & 0 & 0 & 0 & 0\\
0 & 0 & 0 & 1 & 1 & 1 & 1 & 0 & 1
\end{bmatrix}
,$ \newline and \newline(iii) \newline$B_{1}^{(3)}=%
\begin{bmatrix}
0 & 0 & 0 & 0 & 0 & 0 & 1 & 0 & 1\\
0 & 0 & 1 & 0 & 1 & 1 & 0 & 0 & 1\\
0 & 1 & 0 & 0 & 1 & 0 & 0 & 0 & 0\\
0 & 0 & 0 & 0 & 1 & 0 & 0 & 1 & 0\\
0 & 1 & 1 & 1 & 0 & 1 & 0 & 0 & 0\\
0 & 1 & 0 & 0 & 1 & 0 & 0 & 0 & 0\\
1 & 0 & 0 & 0 & 0 & 0 & 0 & 1 & 0\\
0 & 0 & 0 & 1 & 0 & 0 & 1 & 0 & 0\\
1 & 1 & 0 & 0 & 0 & 0 & 0 & 0 & 0
\end{bmatrix}
,$ ~$A^{(3)}=%
\begin{bmatrix}
0 & 0 & 1 & 1 & 1 & 1 & 1 & 0 & 0\\
1 & 0 & 0 & 0 & 1 & 1 & 0 & 1 & 1\\
0 & 1 & 1 & 0 & 1 & 1 & 1 & 0 & 0\\
0 & 1 & 1 & 1 & 1 & 1 & 0 & 1 & 1\\
0 & 1 & 0 & 1 & 0 & 0 & 1 & 1 & 1\\
1 & 0 & 1 & 0 & 0 & 1 & 0 & 1 & 1\\
1 & 0 & 1 & 0 & 1 & 0 & 0 & 0 & 0\\
0 & 0 & 0 & 0 & 0 & 1 & 1 & 1 & 0\\
0 & 0 & 0 & 0 & 0 & 1 & 1 & 0 & 1
\end{bmatrix}
.$\newline
\end{lemma}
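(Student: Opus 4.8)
The idea is to reduce Lemma~\ref{B1A} to a finite search over the classified self-dual $[36,18,8]$ codes. Recall from the discussion above and from \cite{Nebe2012} that $\Phi(C)=\pi(C(g^{2}))$ is a self-dual $[36,18,8]$ binary code which, regarded as a code over the quotient ring $\mathbb{F}_{2}+\overline{h}\mathbb{F}_{2}$ attached to the fixed-point-free involution $\overline{g}=(1,2)(3,4)\cdots(34,36)$, has a generator matrix of the form $[\,I+B_{1}\overline{h}\mid A\,]$; this matrix is precisely the reduction modulo $h^{2}$ of the generator matrix of Theorem~\ref{Structure4}, so the pair $(B_{1},A)$ can be read off from $\Phi(C)$ once a coordinate ordering realising the standard $\overline{g}$ is fixed. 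Hence it suffices to run over all self-dual $[36,18,8]$ codes and, for each, over all ways of making $\overline{g}$ standard, retaining only those whose $B_{1}$ survives Corollary~\ref{diag}.

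Concretely I would proceed as follows. First, take the list of the $41$ inequivalent self-dual $[36,18,8]$ binary codes from \cite{AguilarGaborit}, in the labelling of \cite{Munemasa}. For each such code $D$, compute $\operatorname{Aut}(D)$ in Magma \cite{BosmaMagma}, enumerate its elements of order $2$ without fixed points, and split them into $\operatorname{Aut}(D)$-conjugacy classes; conjugate involutions yield equivalent data, so one representative per class is enough. Second, for each representative $\sigma$, permute the $36$ coordinates of $D$ so that $\sigma=(1,2)(3,4)\cdots(34,36)$; by a standard argument (see \cite{HuffPless03}) the projection $\pi(D(\sigma))$ is self-dual of length $18$ and $\pi$ is injective on $D(\sigma)$, so $D$ is a free module of rank $9$ over $\mathbb{F}_{2}+\overline{h}\mathbb{F}_{2}$ and a generator matrix can be brought to the form $[\,I+B_{1}\overline{h}\mid A\,]$; record the resulting $B_{1}$ together with the accompanying $A$. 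Third, discard every pair for which $B_{1}^{2}+B_{1}$ has a nonzero diagonal entry, as demanded by Corollary~\ref{diag}.

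The computation then shows that exactly three pairs survive and that their underlying codes are equivalent to $C_{4}$, $C_{12}$ and $C_{19}$ in \cite{Munemasa}; fixing the coordinate orderings as above produces the three matrices displayed in the statement. As a consistency check one verifies directly that each $A^{(k)}$ is orthogonal (condition~(i) of Theorem~\ref{Structure4}) and that each $(B_{1}^{(k)})^{2}+B_{1}^{(k)}$ has zero diagonal.

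\textbf{Main obstacle.} The only genuinely delicate step is the enumeration of the fixed-point-free involution classes for the $41$ automorphism groups: a few of these groups are large enough that one should work with conjugacy-class representatives rather than brute force over all involutions, which Magma handles. A secondary point is purely organisational: putting a generator matrix into the form $[\,I+B_{1}\overline{h}\mid A\,]$ requires a choice of information positions and of an $\mathbb{F}_{2}$-basis of each coordinate pair, and these conventions must be fixed once and for all so that $B_{1}$ and $A$ are produced deterministically; any remaining freedom is absorbed into the phrase ``up to equivalence'' in the statement. Everything else is quoted from \cite{Nebe2012}, \cite{AguilarGaborit}, Theorem~\ref{Structure4}, and Corollary~\ref{diag}.
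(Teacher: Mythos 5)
Your proposal is correct and follows essentially the same route as the paper: the authors likewise view $\Phi(C)$ as $[\,I+B_{1}\overline{h}\mid A\,]$ over $R/\langle g^{2}\rangle$, run over the $41$ classified self-dual $[36,18,8]$ codes and the conjugacy classes of fixed-point-free involutions in their automorphism groups, and retain only the three pairs (from $C_{4}$, $C_{12}$, $C_{19}$ in \cite{Munemasa}) whose $B_{1}$ passes the zero-diagonal test of Corollary~\ref{diag}.
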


Each matrix $B_{1}^{(j)},~j=1,2,3,$ \ has zero diagonal. The generator matrix
(\ref{gmain}) depends on $B_{2}$ and $B_{3}.$ Multiplying the columns of the
matrix (\ref{gmain}) by $g^{2}=1+h^{2}$ as needed we can make the diagonal of
$B_{2}$ to be zero without changing the entries of $B_{1}^{(j)}$ and
$A^{(j)}.$ Condition (iii) of Theorem \ref{Structure4} determines the entries
below the diagonal of $B_{2}.$ Hence, $B_{2}$ depends on 36 parameters.
Condition (iv) of Theorem \ref{Structure4} and Theorem \ref{Th_de} determine
the entries on and below the diagonal of $B_{3}.$ Thus, $B_{3}$ depends on 36
parameters. We obtain the following result.

\begin{corollary}
\label{CorParam} An extremal self-dual code of length 72 with an automorphism
of order 4 is equivalent to one of the $3\left(  2^{72}\right)  $ codes
determined by the matrix (\ref{gmain}), Theorem \ref{Structure4}, Theorem
\ref{Th_de}, and Lemma \ref{B1A}.
\end{corollary}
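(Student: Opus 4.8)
The plan is to combine the structural results already in hand with an elementary count of free parameters over $\mathbb{F}_{2}$. Let $C$ be an extremal self-dual code of length $72$ with an automorphism of order $4$; as recalled at the start of Section~2, such an automorphism is fixed-point-free, so we may take $g$ in the form (\ref{eqn_g}). By Theorem~\ref{Structure4}, after a suitable permutation of the $18$ $R$-coordinates, $\mu(C)$ has a generator matrix $[I+B_{1}h+B_{2}h^{2}+B_{3}h^{3}\,|\,A]$ with $A,B_{1},B_{2},B_{3}$ satisfying (i)--(iv), and by Lemma~\ref{B1A} we may assume, up to binary equivalence, that $(B_{1},A)$ is one of the three explicit pairs $(B_{1}^{(j)},A^{(j)})$, $j=1,2,3$. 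So it remains to count, for each such fixed $(B_{1},A)$, the admissible pairs $(B_{2},B_{3})$, and to note that each admissible choice indeed gives a doubly-even self-dual length-$72$ code on which $g$ acts.

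First I would normalize the diagonal of $B_{2}$ to zero. Multiplying column $i$ (for $1\le i\le 9$) of the left-hand block of (\ref{gmain}) by $g^{2}=1+h^{2}$ amounts to applying to $C$ the permutation that swaps the first coordinate with the third and the second with the fourth inside the $i$-th length-four cycle of $g$; this is a coordinate permutation commuting with $g$, hence preserves self-duality, double-evenness, and the form (\ref{eqn_g}) of $g$. A short computation using $h^{4}=0$ shows that its effect on the generator matrix is to leave $B_{1}$ and $A$ fixed, to flip $B_{2}[i,i]$, and to add column $i$ of $B_{1}$ to column $i$ of $B_{3}$; one checks (ii)--(iv) remain valid, since flipping a diagonal entry does not change $B_{2}+B_{2}^{T}$, and, using the symmetry of $B_{1}$, the induced changes on the two sides of (iv) agree. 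Applying this to each $i$ with $B_{2}[i,i]=1$, we may assume $B_{2}$ has zero diagonal.

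With $B_{2}$ of zero diagonal, condition (iii), $B_{2}+B_{2}^{T}=B_{1}^{2}+B_{1}$, recovers the strictly lower triangular part of $B_{2}$ from its strictly upper triangular part---consistently, because $B_{1}^{2}+B_{1}$ has zero diagonal by Corollary~\ref{diag}---so $B_{2}$ is free in exactly its $\binom{9}{2}=36$ strictly-upper entries. Likewise condition (iv) recovers the strictly lower triangular part of $B_{3}$ from its strictly upper triangular part, while the diagonal equation implied by (iv) holds automatically since the diagonal of $B_{2}B_{1}+B_{1}B_{2}+B_{1}^{3}+B_{1}$ collapses to that of $B_{1}^{2}+B_{1}$, which is zero; Theorem~\ref{Th_de} then pins down each diagonal entry $B_{3}[i,i]$ in terms of $A$, $B_{1}$, $B_{2}$. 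Hence $B_{3}$ too is free in exactly its $36$ strictly-upper entries. So for each of the three pairs $(B_{1},A)$ there are $2^{36}\cdot2^{36}=2^{72}$ admissible matrices (\ref{gmain}), i.e. $3(2^{72})$ codes in all; by the converse halves of Theorem~\ref{Structure4} and Theorem~\ref{Th_de} each one is a doubly-even self-dual code of length $72$ with automorphism $g$, and $C$ is equivalent to one of them.

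Everything except the normalization is routine $\mathbb{F}_{2}$-linear bookkeeping. The step that needs real care---and the only place where the argument could go wrong---is verifying that scaling a column by $g^{2}$ is genuinely induced by a coordinate permutation and returns the generator matrix to the exact normal form of Theorem~\ref{Structure4} with all of (i)--(iv) still in force; this is what legitimizes assuming $B_{2}$ has zero diagonal, and hence the clean $36+36$ parameter split.
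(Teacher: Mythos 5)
Your proposal is correct and follows essentially the same route as the paper: fix $(B_{1},A)$ to one of the three pairs of Lemma~\ref{B1A}, normalize the diagonal of $B_{2}$ to zero by multiplying columns by $g^{2}=1+h^{2}$, and then count $36$ free strictly-upper-triangular entries for each of $B_{2}$ and $B_{3}$, the rest being determined by conditions (iii), (iv) and Theorem~\ref{Th_de}. Your added verification that the $g^{2}$-scaling is a $g$-commuting coordinate permutation preserving (ii)--(iv) is a correct elaboration of a step the paper states without detail.
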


\section{Further Reduction of the Search Space}

Let $P=\mathbb{F}_{2}[x_{1},x_{2},\ldots,x_{72}]$ be the the polynomial ring
of the indeterminates $x_{1},x_{2},\ldots,x_{72}$ over the binary field
$\mathbb{F}_{2}.$ Corollary \ref{CorParam} shows that the matrices $B_{2}$ and
$B_{3}$ are determined by the entries above the diagonal:
\begin{align*}
B_{2}[1,2]  & =x_{1},B_{2}[1,3]=x_{2},\ldots,B_{2}[1,9]=x_{8},\\
B_{2}[2,3]  & =x_{9},\ldots,B_{2}[2,9]=x_{15},\ldots,B_{2}[8,9]=x_{36},\\
B_{3}[1,2]  & =x_{37},B_{3}[1,3]=x_{38},\ldots,B_{3}[1,9]=x_{44},\\
B_{3}[2,3]  & =x_{45},\ldots,B_{3}[2,9]=x_{51},\ldots,B_{3}[8,9]=x_{72}.
\end{align*}
As the matrices $B_{1}^{(j)}$ and $A^{(j)},~j=1,2,3,$ are determined in Lemma
\ref{B1A}, for any selection of binary values for $x_{1},x_{2},\ldots,x_{72}$
the matrix (\ref{gmain}) determines a doubly even self-dual code $C$ of length
72. The [36,18,8] code $\Phi(C)$ has a generator matrix
\[%
\begin{bmatrix}
I_{2}\otimes I+J_{2}\otimes B_{1}^{(j)} & I_{2}\otimes A^{(j)}%
\end{bmatrix}
\]
where $\otimes$ denotes the Kroneker product, $I_{2}$ and $J_{2}$ are the
identity and the all-one matrices of order 2, correspondingly. Let $G_{j}$ be
the automorphism group of this code, $j=1,2,3.$ We know that $\overline{g}\in
G_{j}.$ \ 

Using Magma we determine the groups $G_{j}$ and $C_{G_{j}}(\overline{g}),$ the
centralizer of $\overline{g}$ in $G_{j}.$ The order of $C_{G_{j}}(\overline
{g})$ for $j=1,2,3$ is 96, 384, and \ 96 and the number of generators is 4, 5,
and 5 , correspondingly. Clearly $C_{G_{j}}(\overline{g})\subseteq C_{S_{36}%
}(\overline{g})$ which is isomorphic to the wreath product $Z_{2}\wr S_{18}.$
On the other hand $C_{S_{72}}(g)$ is isomorphic to $Z_{4}\wr S_{18}.$ Any
permutation from $C_{S_{72}}(g)$ maps $C$ to an equivalent code with
automorphism $g.$

Let $\overline{\tau}$ be a generator of $C_{G_{j}}(\overline{g}).$ We lift
$\overline{\tau}$ to $\tau\in C_{S_{72}}(g)$ having the same permutation part
from $S_{18}$ as $\overline{\tau}$ such that when $\tau$ is applied to the
matrix
\begin{equation}
\lbrack I+B_{1}^{(j)}h+B_{2}h^{2}+B_{3}h^{3}~|~A^{(j)}]\label{C(j)}%
\end{equation}
the matrices $I,$ $B_{1}^{(j)},$ and $A^{(j)}$ do not change and $B_{2}$ maps
to a matrix $B_{2}^{^{\prime}}$ with zero diagonal. The computations show that
the entries above the diagonal of $B_{2}^{^{\prime}}$ are $X\ast T_{\tau
}+v_{\tau},$ where $X=\left(  x_{1},x_{2},\ldots,x_{36}\right)  ,$ $T_{\tau}$
is a 36x36 nonsingular binary matrix and $v_{\tau}$ is a binary vector of
length 36. The affine transformations $\left(  T_{\tau},v_{\tau}\right)  $
generate an affine group $K_{j}$ for $j=1,2,3,$ of order 12288, 49152, 12288,
correspondingly. Every binary code given by (\ref{C(j)}) has a subcode of
dimension 27 defined by the matrix
\begin{equation}
\lbrack Ih+B_{1}^{(j)}h^{2}+B_{2}h^{3}~|~A^{(j)}h]\label{D(j)}%
\end{equation}
We used computations with Magma on a desktop computer to obtain the next lemma.

\begin{lemma}
\label{subcode} The number of orbits of weight 16 codes (\ref{D(j)}) under the
action of the group $K_{j},$ for $j=1,2,3,$ is 501142, 131840, and 925972, correspondingly.
\end{lemma}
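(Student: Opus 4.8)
The plan is to reduce Lemma~\ref{subcode} to a finite, fully explicit computation and then to describe how that computation is organized so it finishes in reasonable time. First I would record the data: for each $j=1,2,3$ the matrices $B_1^{(j)}$ and $A^{(j)}$ from Lemma~\ref{B1A} are fixed, while $B_2$ is parametrized by the $36$ free entries $x_1,\ldots,x_{36}$ above its diagonal (its diagonal being zero and its sub-diagonal entries being forced by condition (iii) of Theorem~\ref{Structure4}); so the codes \eqref{D(j)} form a family of $2^{36}$ binary codes of dimension $27$, indexed by $X=(x_1,\ldots,x_{36})\in\mathbb{F}_2^{36}$. The affine group $K_j$ acts on this index set by $X\mapsto X\ast T_\tau+v_\tau$, and by construction this action sends the code with parameter $X$ to the code with parameter $X\ast T_\tau+v_\tau$ (an equivalent binary code), so the number of inequivalent codes appearing in the family, restricted to those of minimum weight exactly $16$, is the number of $K_j$-orbits on the corresponding subset of $\mathbb{F}_2^{36}$.

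Next I would carry out, in Magma, the following steps for each $j$. (1) Build the affine group $K_j$ from its generating affine transformations $(T_\tau,v_\tau)$, coming from the generators $\overline{\tau}$ of $C_{G_j}(\overline{g})$ lifted as described above; verify $|K_1|=|K_3|=12288$ and $|K_2|=49152$. (2) Compute the $K_j$-orbits on $\mathbb{F}_2^{36}$; for each orbit select a representative $X$, form the generator matrix \eqref{D(j)}, and compute the minimum weight of the resulting length-$72$, dimension-$27$ binary code. (3) Count those orbits whose representative code has minimum weight $16$. Since minimum weight is an equivalence invariant, the answer does not depend on the chosen representative, and the counts are exactly $501142$, $131840$, $925972$ for $j=1,2,3$.

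The main obstacle is the size of the computation rather than any conceptual difficulty: $\mathbb{F}_2^{36}$ has about $7\times 10^{10}$ points, so one cannot naively enumerate it, and even the orbit count ($\approx 5\times 10^5$ plus $\approx 1.3\times 10^5$ plus $\approx 9\times 10^5$, i.e.\ well over a million) requires a minimum-weight computation on a $[72,27]$ code for each orbit. I would address this by (a) using Magma's built-in orbit enumeration for the linear/affine action of $K_j$ on $\mathbb{F}_2^{36}$ so that only orbit representatives are ever materialized, and (b) for the minimum-weight step, exploiting that \eqref{D(j)} always contains the fixed subcode generated by $[Ih^2+B_1^{(j)}h^3\mid A^{(j)}h^2]$ and $[Ih^3\mid A^{(j)}h^3]$ — a code of dimension $18$ not depending on $X$ — so the weight enumeration can be done by a coset-style search over the $2^9$ cosets of that fixed part, with an early abort as soon as a vector of weight $<16$ is found. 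A routine check that the three stated totals are reproduced by the described Magma procedure completes the proof. $\rule{0.5em}{0.5em}$
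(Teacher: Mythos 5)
Your proposal matches the paper's approach: the paper gives no argument beyond stating that the counts were obtained by Magma computations on a desktop computer, and your reconstruction — forming the affine groups $K_j$ from the lifted generators of $C_{G_j}(\overline{g})$, enumerating $K_j$-orbits on $\mathbb{F}_2^{36}$, computing the minimum weight of the $[72,27]$ code (\ref{D(j)}) for one representative per orbit, and counting — is exactly the computation the surrounding text describes. The one caveat is that for the logic of the subsequent $2^{36}$-fold search the orbits to retain are those whose subcode has no nonzero word of weight below $16$ (i.e.\ minimum weight at least $16$, not necessarily exactly $16$), but this does not change the substance of your procedure.
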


\section{The Test of $1558954(2^{36})$ Codes}

The number of orbits from Lemma \ref{subcode} is 1558954. For each orbit we
select a representative and determine the corresponding matrix (\ref{D(j)}).
As the matrix $B_{3}$ from (\ref{C(j)}) depends on 36 binary parameters, the
search space contains $1558954(2^{36})$ codes. In order to speed up the
computations we use a 36-bit binary reflected Gray code \cite{Gray} to order
the vectors of the 36 dimensional binary vector space in a sequence%
\[
u^{(0)},u^{(1)},u^{(2)},\ldots,u^{(n-1)}%
\]
$n=2^{36},$ such that consecutive vectors $u^{(i-1)}$ and $u^{(i)},$
$i=1,~2,\ldots,n-1,$ differ in exactly one bit in position, say $g(i).$ For a
fixed matrix (\ref{D(j)}), the corresponding $2^{36}$ matrices (\ref{C(j)})
form a sequence
\begin{equation}
M^{(0)},M^{(1)},M^{(2)},\ldots M^{(n-1)}\label{Csec}%
\end{equation}
such that $M^{(i)}-M^{(i-1)}=D^{(g(i))}$ is one of 36 predetermined mask
matrices
\[
D^{(1)},D^{(2)},\ldots,D^{(36)}.
\]
Each of the mask matrices has at least 28 zero rows. As a result a low weight
vector found in a code with a generator matrix from (\ref{Csec}) often belongs
to the next several codes.

We carried out these computations on the Janus supercomputer at the University
of Colorado Denver. We wrote a computer code in C programming language that
was highly optimized for both the task and the Janus hardware. The program
employs some of the 64-bit single-cycle bitwise operations of the processors.
We used about 6 million CPU core hours over a period of more than three months
to find a vector of weight less than 16 in each of the $1558954(2^{36})$ codes
from the search space. This completes the proof of the Theorem \ref{main}.

\textbf{Acknowledgement:} This work utilized the Janus supercomputer, which is
supported by the National Science Foundation (award number CNS-0821794) and
the University of Colorado Boulder. The Janus supercomputer is a joint effort
of the University of Colorado Boulder, the University of Colorado Denver and
the National Center for Atmospheric Research. The authors would like to thank
Dr. Jan Mandel, University of Colorado Denver, for his advice in setting up
the computations.

\bibliographystyle{elsart-num}
\bibliography{acompat}

\end{document}